\documentclass[12pt,letterpaper]{amsart}
\usepackage{amssymb,amsmath,amscd,eucal,epsfig}
\usepackage{lscape}
\newcommand{\dis}{\displaystyle}

\def\<{\langle}                     
\def\>{\rangle}                     

\newcommand{\ben}{\begin{enumerate}}
\newcommand{\een}{\end{enumerate}}

\theoremstyle{plain}
\newtheorem{theorem}{Theorem}[section]

\newtheorem{remark}{Remark}[section]

\theoremstyle{definition}
\newtheorem{definition}{Definition}[section]

\numberwithin{equation}{section}
\begin{document}

\begin{center}
{\textbf{{Picture Fuzzy Multigroups}}}\ \\ \ \\

{Taiwo O. Sangodapo \\ toewuola77@gmail.com\\
Department of Mathematics,\\ University of Ibadan, Nigeria}
\end{center}
\ \\ \  \\

\noindent {\textbf{Abstract:}}
In this paper, picture fuzzy multisets were studied together with their associated properties. We also introduced the concept of picture fuzzy multigroups and established some of their algebraic properties.

\ \\ \ \\

\noindent \textbf{2010 AMS Classification:} 03E72, 08A72, 20N25

\ \\ \ \\
\noindent {\textbf{Keywords}}: Fuzzy group, multigroup, fuzzy multigroup, Picture fuzzy subgroup
\ \\ \ \\

\section{Introduction}
The origination of theory of fuzzy sets (FSs) was traced back to Zadeh's work where the crisp set was extended to fuzzy sets \cite{z}. The concept of intuitionistic fuzzy sets (IFSs) was pioneered by Atanassov \cite{a1}.
Cuong and Kreinovich \cite{ck} put forward the notion of picture fuzzy sets (PFSs) as a generalisation of fuzzy sets and intuitionistic fuzzy sets.

Yagar \cite{y} introduced fuzzy multisets (FMS) allowing each element occurring more that once. This notion was extended \cite{s} by Shinoj and Sunil to intuitionistic fuzzy multisets. Cao et al \cite{cfs} initiated picture fuzzy multisets (PFMSs) as a generalisation of FM and IFMS and also as an extension of PFS. In \cite{t1, ts} some properties of PFMSs were investigated. This notion has been extended to relations, and as a result some properties were established in \cite{t2, t3}. 

Rosenfeld \cite{r} generalised fuzzy sets to fuzzy groups (FGs). In \cite{s1}, Shinoj et al extended Rosenfeld to fuzzy multigroup, and Shinoj and Sunil \cite{s2} studied the work in \cite{s1} to introduce the concept of intuitionistic fuzzy multigroups. 
 
In this paper, the concept of picture fuzzy multigroups was introduced as an extension of the work in \cite{s2}. We also established some of the properties associated with them.

\ \\ \ \\

\section{Preliminaries}
\begin{definition} \cite{z}
Let $Y$ be a nonempty set. A FS $Q$ of $Y$ is an object of the form $$Q = \lbrace \langle y, \sigma_{Q}(x) \rangle | y \in Y \rbrace$$  with a membership function $$\sigma_{Q} : Q \longrightarrow [0, 1]$$ where the function $\sigma_{Q}(y)$ denotes the degree of membership of $y \in Q.$
\end{definition}

\begin{definition} \cite{a1}
Let a nonempty set $Y$ be fixed. An IFS $Q$ of $Y$ is an object of the form $$Q = \lbrace \langle y, \sigma_{Q}(y), \tau_{Q}(y) \rangle | y \in Y \rbrace$$ where the functions $$\sigma_{Q}: Y \rightarrow [0, 1]~\text{and}~ \tau_{Q}: Y \rightarrow [0, 1]$$ are called the membership and non-membership degrees of $y \in Q$, respectively, and for every $y \in Y$, $$0 \leq \sigma_{Q}(y) + \tau_{Q}(y) \leq 1.$$
\end{definition}

\begin{definition} \cite{a1}
Given a nonempty set $\mathcal{C}.$  An IFS $\mathcal{D}$ of $\mathcal{C}$ is written as $$\mathcal{D} = \lbrace \langle \dfrac{\sigma_{\mathcal{D}}(z), \tau_{\mathcal{D}}(z)}{z} \rangle | z \in \mathcal{C} \rbrace,$$
where the functions $\sigma_{\mathcal{D}}: \mathcal{C} \rightarrow \mathbb{I}~\text{and}~ \tau_{\mathcal{D}}: \mathcal{C} \rightarrow \mathbb{I}$ are called the membership and non-membership degrees of $z \in \mathcal{C}$, respectively, and for every $z \in \mathcal{C}$, $$0 \leq \sigma_{\mathcal{D}}(z) + \tau_{\mathcal{D}}(z) \leq 1.$$
\end{definition}

\begin{definition} \cite{ck}
A picture fuzzy set Q of $Y$ is defined as $$Q = \lbrace (y, \sigma_{Q}(y), \tau_{Q}(y), \gamma_{Q}(y))| y \in Y \rbrace,$$ where the functions $$\sigma_{Q}: Y \rightarrow [0, 1],~ \tau_{Q}: Y \rightarrow [0, 1]~ \text{and}~ \gamma_{Q}: Y \rightarrow [0, 1]$$
are called the positive, neutral and negative membership degrees of $y \in Q$, respectively, and $\sigma_{Q}, \tau_{Q}, \gamma_{Q}$ satisfy $$0 \leq \sigma_{Q}(y) + \tau_{Q}(y) + \gamma_{Q}(y) \leq 1,~ \forall y \in Y.$$ For each $y \in Y$, $S_{Q}(y) = 1 - (\sigma_{Q}(y) + \tau_{Q}(y) + \gamma_{Q}(y))$ is called the refusal membership degree of $y \in Q$.
\end{definition}

\begin{definition} \cite{ck}
Let $Q$ and $R$ be two PFSs. Then, the inclusion, equality, union, intersection and complement are defined as follow:
\begin{itemize}
\item[(i)] $Q \subseteq R$ if and only if for all $y \in Y$, $\sigma_{Q}(y) \leq \sigma_{R}(y)$, $\tau_{Q}(y) \leq \tau_{R}(y)$ and $\gamma_{Q}(y) \geq \gamma_{R}(y).$ \ \\
\item[(ii)] $Q = R$ if and only if $Q \subseteq R$ and $R \subseteq Q.$ \ \\
\item[(iii)] $Q \cup R = \lbrace ( y, \sigma_{Q}(y) \vee \sigma_R(y), \tau_Q(y) \wedge \tau_R(y)), \gamma_Q(y) \wedge \gamma_R(y)) | y \in Y \rbrace.$ \ \\
\item[(iv)] $Q \cap R = \lbrace ( y, \sigma_{Q}(y) \wedge \sigma_R(y), \tau_Q(y) \wedge \tau_R(y)), \gamma_Q(y) \vee \gamma_R(y)) | y \in Y \rbrace.$
\end{itemize}
\end{definition}
\noindent The set of all picture fuzzy multisets over $\mathcal{C},$ is denoted as PFMS($\mathcal{C}.$)

\begin{definition}\cite{dp}
Let $(G, \ast)$ be a crisp group and $Q = \lbrace (y, \sigma_{Q}(y), \tau_Q(y), \eta_Q(y))~|~ y \in G \rbrace$ be a PFS in $G$. Then, $Q$ is called picture fuzzy subgroup of $G$ (PFSG) if\\ (i)~ $\sigma_Q(a \ast b) \geq \sigma_Q(a) \wedge \sigma_Q(b),~ \tau_Q(a \ast b) \geq \tau_Q(a) \wedge \tau_Q(b),~ \eta_Q(a \ast b) \leq \eta_Q(a) \vee \eta_Q(b) $\\ \ \\ (ii) $\sigma_Q(a^{-1}) \geq \sigma_Q(a),~ \tau_Q(a^{-1}) \geq \tau_Q(a),~ \eta_Q(a^{-1}) \leq \eta_Q(a)$ for all $a, b \in G$.\\ Notice that $a^{-1}$ is the inverse of $a \in G$,\\ \ \\ or equivalently, $Q$ is a PFSG of $G$ if and only if\\ $\sigma_Q(a \ast b^{-1}) \geq \sigma_Q(a) \wedge \sigma_Q(b),~ \tau_Q(a \ast b^{-1}) \geq \tau_Q(a) \wedge \tau_Q(b),~ \eta_Q(a \ast b^{-1}) \leq \eta_Q(a) \vee \eta_Q(b) .$
\end{definition}

\begin{definition} \cite{dp}
Let $(G, \ast)$ be a crisp group and $Q = (\sigma_{Q}, \tau_Q, \eta_Q)$ be a PFSG of $G$. Then, for $a \in G$ the picture fuzzy left coset of $Q \in G$ is the PFS $aQ = (\sigma_{aQ},~ \tau_{aQ},~ \eta_{aQ})$ defined by $$\sigma_{aQ}(u) = \sigma_{Q} (a^{-1} \ast u),~ \tau_{aQ}(u) = \tau_{Q} (a^{-1} \ast u)~ \text{and}~  \eta_{aQ}(u) = \eta_{Q} (a^{-1} \ast u)$$ for all $ u \in G$.
\end{definition}

\begin{definition}\cite{dp}
Let $(G, \ast)$ be a crisp group and $Q = (\sigma_{Q}, \tau_Q, \eta_Q)$ be a PFSG of $G$. Then, for $a \in G$ the picture fuzzy right coset of $Q \in G$ is the PFS $Qa = (\sigma_{Qa},~ \tau_{Qa},~ \eta_{Qa})$ defined by $$\sigma_{Qa}(u) = \sigma_{Q} (u \ast a^{-1}),~ \tau_{Qa}(y) = \tau_{Q} (u \ast a^{-1})~ \text{and}~ \eta_{Qa}(u) = \eta_{Q} ( u \ast a^{-1})$$ for all $u \in G$.
\end{definition}

\begin{definition}\cite{dp}
Let $(G, \ast)$ be a crisp group and $Q = (\sigma_{Q}, \tau_Q, \eta_Q)$ be a PFSG of $G$. Then, $Q$ is called picture fuzzy normal subgroup (PFNSG) of $G$ if $$\sigma_{Qa}(y) = \sigma_{aQ}(y),~ \tau_{Qa}(y) = \tau_{aQ}(y),~ \eta_{Qa}(y) = \eta_{aQ}(y)$$ for all $a,~ y \in G$.
\end{definition}
\begin{remark}
Dogra and Pal established that PFSG of $G$ is normal if and only if\\ $(i)~ \sigma_{Q}(u^{-1} \ast a \ast u) = \sigma_{Q}(a)$\\ $(ii)~ \tau_{Q}(u^{-1} \ast a \ast u) = \tau_{Q}(a) $\\ $(ii)~ \eta_{Q}(u^{-1} \ast a \ast u) = \eta_{Q}(a).$ For all $a \in Q$ and $u \in G.$
\end{remark}

Cut set of picture fuzzy sets was introduced by Dutta and Ganju \cite{dg} but the definition did not capture the cut set very well. Thus, Dogra and Pal \cite{dp} corrected it in their paper.

\begin{definition} \cite{dp}
Let $Q  = \lbrace (x, \sigma_{Q}, \tau_{Q}, \gamma_{Q})| a \in Y \rbrace$ be PFS over the universe $Y$. Then, $(r,s,t)$-cut set of $Q$ is the crisp set in $Q$, denoted by $C_{r, s, t} (Q)$ and is defined by $$C_{r, s, t} (Q) = \left\lbrace a \in Y | \sigma_{Q}(a) \geq r,~\tau_{Q}(a) \geq s, \gamma_{Q}(a) \leq t  \right\rbrace$$ $r, s, t \in [0, 1]$ with the condition  $0 \leq r + s + t \leq 1.$
\end{definition}

\begin{theorem} \cite{dp} \label{1}
Let $(G, \ast)$ be a crisp group and $Q = (\sigma_{Q}, \tau_Q, \eta_Q)$ be a PFSG of $G$. Then, $Q$ is a PFSG if and only if $C_{r, s, t} (Q)$ is a crisp subgroup of $G$.
\end{theorem}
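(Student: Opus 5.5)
The plan is to prove both directions directly from the definitions, using the one-step subgroup test in $G$ and the equivalent characterisation of a PFSG recalled after the definition of \cite{dp}: $Q$ is a PFSG if and only if
$$\sigma_Q(a\ast b^{-1})\geq \sigma_Q(a)\wedge\sigma_Q(b),\quad \tau_Q(a\ast b^{-1})\geq \tau_Q(a)\wedge\tau_Q(b),\quad \eta_Q(a\ast b^{-1})\leq \eta_Q(a)\vee\eta_Q(b)$$
for all $a,b\in G$. I read the statement as follows: $Q$ is a PFSG if and only if every \emph{nonempty} cut $C_{r,s,t}(Q)$ with $r+s+t\leq 1$ is a crisp subgroup of $G$. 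An empty cut can never be a subgroup, so this is the only sensible reading. Throughout I write $\eta_Q$ for the negative membership, which the cut definition calls $\gamma_Q$.

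\emph{Forward direction.} Assume $Q$ is a PFSG and $C_{r,s,t}(Q)\neq\emptyset$. Take $a,b\in C_{r,s,t}(Q)$. Then $\sigma_Q(a\ast b^{-1})\geq \sigma_Q(a)\wedge\sigma_Q(b)\geq r$. In the same way $\tau_Q(a\ast b^{-1})\geq s$. Finally $\eta_Q(a\ast b^{-1})\leq \eta_Q(a)\vee\eta_Q(b)\leq t$. Hence $a\ast b^{-1}\in C_{r,s,t}(Q)$, and by the one-step subgroup test the cut is a subgroup of $G$.

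\emph{Converse.} Assume every nonempty cut is a subgroup, and fix $a,b\in G$. Set
$$r=\sigma_Q(a)\wedge\sigma_Q(b),\quad s=\tau_Q(a)\wedge\tau_Q(b),\quad t=\eta_Q(a)\vee\eta_Q(b).$$
By construction $a,b\in C_{r,s,t}(Q)$, so the cut is nonempty. If it is a legitimate cut, it is a subgroup, so $a\ast b^{-1}$ lies in it. Unwinding membership in the cut gives exactly the three inequalities of the characterisation, so $Q$ is a PFSG.

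The one point needing care, and the main obstacle, is checking that this choice satisfies the side condition $0\leq r+s+t\leq 1$ in the definition of a cut. I would handle it by cases on where the maximum $t$ is attained. Suppose $t=\eta_Q(b)$. Then, since $r\leq\sigma_Q(b)$ and $s\leq\tau_Q(b)$,
$$r+s+t\leq \sigma_Q(b)+\tau_Q(b)+\eta_Q(b)\leq 1,$$
because $Q$ is a PFS. The case $t=\eta_Q(a)$ is symmetric. Nonnegativity is immediate, so the cut is legitimate and the converse argument goes through.
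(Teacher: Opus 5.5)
The paper gives no proof of this statement. It is quoted from Dogra and Pal \cite{dp} as a preliminary, so there is no in-paper argument to compare against.

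Your proof is the standard level-set argument and it is correct. Two of your choices are the right ones:
\begin{itemize}
\item Restricting to nonempty cuts is genuinely necessary. For example, the constant PFS $\sigma_Q\equiv\frac12$, $\tau_Q\equiv\eta_Q\equiv 0$ is a PFSG, yet $C_{1,0,0}(Q)=\emptyset$. So the statement as literally worded (with $Q$ already assumed to be a PFSG) cannot hold for all admissible $(r,s,t)$.
\item In the converse, the case split on where $\eta_Q(a)\vee\eta_Q(b)$ is attained correctly shows that your triple satisfies $r+s+t\le 1$. This is the only point where the constraint $\sigma_Q+\tau_Q+\eta_Q\le 1$ is used.
\end{itemize}
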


\begin{theorem} \cite{dg} \label{p}
If $Q$ and $R$ are two PFSs of a universe $Y$, then the following holds \begin{itemize}
\item [(i)]~ $C_{r, s, t} (Q) \subseteq C_{u, v, w} (Q)$ if $r \geq u,~ s \geq v,~ t \leq w.$  \ \\
\item [(ii)]~ $C_{1-s-t, s, t}(Q) \subseteq C_{r,s, t}(Q) \subseteq C_{r, 1-r-t, t}(Q).$ \ \\
\item [(iii)]~ $Q \subseteq R$ implies $C_{r, s, t} (Q) \subseteq C_{r, s, t} (R).$ \ \\
\item [(iv)]~ $C_{r, s, t} (Q \cap R) = C_{r, s, t} (Q) \cap C_{r, s, t} (R).$ \ \\
\item [(v)]~ $C_{r, s, t} (Q \cup R) \supseteq C_{r, s, t} (Q) \cup C_{r, s, t} (R).$ \ \\
\item [(vi)]~ $C_{r, s, t} (\cap Q_{i}) = \cap C_{r, s, t} (Q_{i}).$ \ \\
\item [(vii)]~ $ C_{1,0,0}(Q) = Y.$
\end{itemize}
\end{theorem}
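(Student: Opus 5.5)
The plan is to check each item directly from the definition of $C_{r,s,t}(Q)$, unwinding membership and using elementary order properties of $\wedge$ and $\vee$ on $[0,1]$. I read the union and intersection of the preceding definition as acting componentwise, with $\vee$ on $\sigma$ and $\wedge$ on $\gamma$ for the union and the reverse for the intersection. Both use $\wedge$ on the neutral degree $\tau$.

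For (i), let $a\in C_{r,s,t}(Q)$. Then $\sigma_Q(a)\ge r\ge u$, $\tau_Q(a)\ge s\ge v$ and $\gamma_Q(a)\le t\le w$, so $a\in C_{u,v,w}(Q)$. Item (ii) should reduce to (i), but a side condition is needed. For the left inclusion, $(1-s-t,s,t)$ versus $(r,s,t)$ requires $1-s-t\ge r$, and this is exactly the standing constraint $r+s+t\le 1$. For the right inclusion, $(r,s,t)$ versus $(r,1-r-t,t)$ requires $s\ge 1-r-t$. That is the reverse inequality, so under $r+s+t\le1$ it holds only when $r+s+t=1$. This is the point I expect to be the main obstacle. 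I would first try to recover the inclusion from the picture fuzzy constraint $\sigma+\tau+\gamma\le 1$ itself, though this looks unlikely to give the needed direction. Failing that, I would state (ii) under the hypothesis $r+s+t=1$, where both inclusions follow from (i).

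For (iii), suppose $Q\subseteq R$ and $a\in C_{r,s,t}(Q)$. Then $\sigma_R(a)\ge\sigma_Q(a)\ge r$, $\tau_R(a)\ge\tau_Q(a)\ge s$ and $\gamma_R(a)\le\gamma_Q(a)\le t$, so $a\in C_{r,s,t}(R)$. For (iv), I would use the equivalences $x\wedge y\ge r\iff (x\ge r \text{ and } y\ge r)$ and $x\vee y\le t\iff (x\le t\text{ and }y\le t)$, applied coordinatewise. These give both inclusions at once. Item (vi) is the same argument with $\inf$ and $\sup$ over an arbitrary family.

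For (v), take $a\in C_{r,s,t}(Q)$. Then $\sigma_{Q\cup R}(a)=\sigma_Q(a)\vee\sigma_R(a)\ge r$ and $\gamma_{Q\cup R}(a)\le\gamma_Q(a)\le t$. The neutral coordinate is where the union as literally defined, with $\wedge$ on $\tau$, causes trouble. Here I would either use the convention $\tau_Q\vee\tau_R$ from \cite{dg} or assume the corresponding bound for $R$ as well. Finally, (vii) as stated, $C_{1,0,0}(Q)=Y$, would need $\sigma_Q\equiv1$. The intended statement is surely $C_{0,0,1}(Q)=Y$, which is immediate because every $a\in Y$ satisfies $\sigma_Q(a)\ge0$, $\tau_Q(a)\ge0$ and $\gamma_Q(a)\le1$. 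I would prove that version and flag the index convention.
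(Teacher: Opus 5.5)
The paper gives no proof of this theorem. It is quoted from Dutta and Ganju, and the paper itself notes that their cut was defined differently from the Dogra--Pal cut $C_{r,s,t}$ used here, so there is no argument to compare against. Your direct unwinding is correct and complete for (i), for the left inclusion of (ii) (which is exactly $r+s+t\le 1$), and for (iii), (iv) and (vi), with $\bigcap Q_i$ taken via $\inf$ on $\sigma,\tau$ and $\sup$ on $\gamma$.

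Your three objections are also correct, but they are not obstacles to work around. Under the paper's definitions these items are false, and you should say so with counterexamples rather than hedge. Take $Y=\{a\}$.
\begin{itemize}
\item Right inclusion of (ii): let $Q(a)=(0.1,0.1,0.1)$ and $(r,s,t)=(0.1,0.1,0.1)$. Then $a\in C_{r,s,t}(Q)$, but $\tau_Q(a)=0.1<0.8=1-r-t$, so $C_{r,1-r-t,t}(Q)=\emptyset$. Recovering the inclusion from $\sigma+\tau+\gamma\le 1$ cannot work, because that constraint only bounds $\tau$ from above.
\item (v): let $Q(a)=(0,0.5,0)$, $R(a)=(0,0,0)$ and $(r,s,t)=(0,0.5,0)$. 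Then $(Q\cup R)(a)=(0,0,0)$, so $a\in C_{r,s,t}(Q)$ but $a\notin C_{r,s,t}(Q\cup R)$.
\item (vii): $Q(a)=(0,0,0)$ gives $C_{1,0,0}(Q)=\emptyset\neq Y$.
\end{itemize}

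Two comments on your repairs. First, switching the union to $\tau_Q\vee\tau_R$ is not a harmless change of convention. For $Q(a)=(0.5,0,0.5)$ and $R(a)=(0,0.5,0.5)$ it produces $(0.5,0.5,0.5)$, which violates $\sigma+\tau+\gamma\le 1$, so that ``union'' is no longer a picture fuzzy set. It is cleaner to keep the paper's union and prove the corrected statement: if $a\in C_{r,s,t}(Q)$ and $\tau_R(a)\ge s$, then $a\in C_{r,s,t}(Q\cup R)$. In particular $C_{r,s,t}(Q)\cap C_{r,s,t}(R)\subseteq C_{r,s,t}(Q\cup R)$.

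Second, restricting (ii) to $r+s+t=1$ makes all three sets identical, so the restricted statement says nothing. The meaningful companion of the left inclusion is $C_{r,s,t}(Q)\subseteq C_{r,s,1-r-s}(Q)$. This follows from (i) because $t\le 1-r-s$; the coordinate bounded from above is $\gamma$, not $\tau$. Your replacement $C_{0,0,1}(Q)=Y$ for (vii) is right.
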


\section{Picture Fuzzy Multigroups}
This section introduces picture fuzzy multigroups. Throughout this section, a group witha binary operation and an identity element $e$ is denoted by $C.$ Also, the set of all picture fuzzy multisets and picture fuzzy multigroups over $C$ are denoted by PFMS(C) and PFMG(C), respectively.
\ \\
\begin{definition}
Let $D \in PFMS(C).$ Then, $D^{-1}$ is defined as $$PCM_D^{-1}(g) = PCM_D(g^{-1}),~~ NeCM_D^{-1}(g) = NeCM_D(g^{-1})~~ \text{and}~~ NCM_D^{-1}(g) = NCM_D(g^{-1})$$
\end{definition}
\begin{definition}
Let $D, E \in PFMS(C).$ Then, the composite of $D$ and $E$ is defined as $$PCM_{D \circ E}(g_1) = \bigvee \lbrace PCM_D(g_2) \wedge PCM_E(g_3) :~g_2,g_3 \in C~ \text{and}~ g_2g_3 = g_1 \rbrace $$
$$NeCM_{D \circ E}(g_1) = \bigvee \lbrace NeCM_D(g_2) \wedge NeCM_E(g_3) :~g_2,g_3 \in C~ \text{and}~ g_2g_3 = g_1 \rbrace $$ and $$NCM_{D \circ E}(g_1) = \bigwedge \lbrace NCM_D(g_2) \vee NCM_E(g_3) :~g_2,g_3 \in C~ \text{and}~ g_2g_3 = g_1 \rbrace .$$
\end{definition}

\begin{theorem}
Let $D, E, D_i \in PFMS(C).$ Then, the following properties hold:
\begin{itemize}
\item [i.] $(D^{-1})^{-1} = D$
\item [ii.] $D \subseteq E~\Rightarrow~D^{-1} \subseteq E^{-1}$
\item [iii.] $ \dis \left(\bigcup^n_{i = 1} D_i\right)^{-1} = \bigcup^n_{i = 1} D_i^{-1}$
\item [iv.] $\dis \left(\bigcap^n_{i = 1} D_i\right)^{-1} = \bigcap^n_{i = 1} D_i^{-1}$
\item [v.] $(D \circ E) = E^{-1} \circ D^{-1}$
\item [vi.] 
\begin{eqnarray*}
PCM_{D \circ E}(g_1) & = & \bigvee_{g_2 \in C} \lbrace PCM_D(g_2) \wedge PCM_E(g_2^{-1}g_1)\rbrace~~ \forall~~ g_1 \in C\\
& = & \bigvee_{g_2 \in C} \lbrace PCM_D(g_1 g^{-1}_2) \wedge PCM_E(g_2)\rbrace~~ \forall~~ g_1 \in C
\end{eqnarray*}

\begin{eqnarray*}
NeCM_{D \circ E}(g_1) & = & \bigvee_{g_2 \in C} \lbrace NeCM_D(g_2) \wedge NeCM_E(g_2^{-1}g_1)\rbrace~~ \forall~~ g_1 \in C\\
& = & \bigvee_{g_2 \in C} \lbrace NeCM_D(g_1 g^{-1}_2) \wedge NeCM_E(g_2)\rbrace~~ \forall~~ g_1 \in C
\end{eqnarray*}
and 
\begin{eqnarray*}
NCM_{D \circ E}(g_1) & = & \bigwedge_{g_2 \in C} \lbrace NCM_D(g_2) \vee NeCM_E(g_2^{-1}g_1)\rbrace~~ \forall~~ g_1 \in C\\
& = & \bigwedge_{g_2 \in C} \lbrace NCM_D(g_1 g^{-1}_2) \vee NCM_E(g_2)\rbrace~~ \forall~~ g_1 \in C
\end{eqnarray*}
\end{itemize} 
\end{theorem}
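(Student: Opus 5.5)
The plan is to argue pointwise, one count function at a time. Inclusion, union, intersection, inverse and composite of picture fuzzy multisets are all defined coordinatewise on $PCM$, $NeCM$ and $NCM$, so each identity reduces to an identity between (multi)valued functions on $C$. Since the three coordinates are handled by the same lattice argument, with $\vee$ and $\wedge$ interchanged for $NCM$, I will write out the $PCM$ case and note the dual changes for the others. The tools are the defining equation $PCM_{D^{-1}}(g)=PCM_D(g^{-1})$, the group identities $(g^{-1})^{-1}=g$ and $(g_2g_3)^{-1}=g_3^{-1}g_2^{-1}$, and the fact that $g\mapsto g^{-1}$ and $g\mapsto g_2^{-1}g$ are bijections of $C$.

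\textbf{Items (i)--(iv).} For (i), I compute $PCM_{(D^{-1})^{-1}}(g)=PCM_{D^{-1}}(g^{-1})=PCM_D(g)$, and do the same for $NeCM$ and $NCM$. For (ii), suppose $D\subseteq E$, so that $PCM_D\le PCM_E$, $NeCM_D\le NeCM_E$ and $NCM_D\ge NCM_E$ at every point. Evaluating these inequalities at $g^{-1}$ gives exactly the inequalities defining $D^{-1}\subseteq E^{-1}$. For (iii) and (iv), I write
\[
PCM_{(\bigcup D_i)^{-1}}(g)=PCM_{\bigcup D_i}(g^{-1})=\bigvee_i PCM_{D_i}(g^{-1})=\bigvee_i PCM_{D_i^{-1}}(g).
\]
The $NeCM$ and $NCM$ coordinates follow in the same way with the meets and joins prescribed by the union and intersection definitions, and (iv) is identical with $\vee$ and $\wedge$ swapped.

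\textbf{Item (v).} I read it as $(D\circ E)^{-1}=E^{-1}\circ D^{-1}$, which is the intended and true statement; the displayed version is missing the outer inverse. By definition,
\[
PCM_{(D\circ E)^{-1}}(g)=PCM_{D\circ E}(g^{-1})=\bigvee\{PCM_D(g_2)\wedge PCM_E(g_3): g_2g_3=g^{-1}\}.
\]
The condition $g_2g_3=g^{-1}$ is equivalent to $g_3^{-1}g_2^{-1}=g$. Substituting $h_2=g_3^{-1}$ and $h_3=g_2^{-1}$ is a bijection of the index sets, and it turns the expression into
\[
\bigvee\{PCM_{E^{-1}}(h_2)\wedge PCM_{D^{-1}}(h_3): h_2h_3=g\}=PCM_{E^{-1}\circ D^{-1}}(g).
\]
The $NCM$ coordinate uses $\bigwedge$ of $\vee$ and goes through verbatim.

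\textbf{Item (vi).} I reindex the supremum in the composite. The set of pairs $\{(g_2,g_3): g_2g_3=g_1\}$ is in bijection with $C$ through $g_2\mapsto(g_2,g_2^{-1}g_1)$, which gives the first form. The map $g_3\mapsto(g_1g_3^{-1},g_3)$ gives the second form after renaming $g_3$ as $g_2$. In the $NCM$ line the $NeCM_E$ appearing in the statement should read $NCM_E$, and I will prove the statement with that correction.

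The main obstacle is conceptual rather than computational. Membership in a picture fuzzy multiset is a multiset, that is, a decreasingly ordered sequence of values, rather than a single number, so $\vee$ and $\wedge$ must be interpreted componentwise on the ordered count sequences. I will fix this convention at the start and check that the sup/inf reindexing arguments remain valid under it. They do, because every step is either evaluation at a transformed argument or a bijective reindexing, both of which commute with componentwise lattice operations.
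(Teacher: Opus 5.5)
Your proposal is correct and follows essentially the same route as the paper. Items (i)--(iv) are handled by pointwise evaluation at $g^{-1}$. Items (v) and (vi) are handled by reindexing the supremum/infimum in the composite via the group bijections $(g_2,g_3)\mapsto(g_3^{-1},g_2^{-1})$, $g_2\mapsto(g_2,g_2^{-1}g_1)$ and $g_3\mapsto(g_1g_3^{-1},g_3)$. Your readings of (v) as $(D\circ E)^{-1}=E^{-1}\circ D^{-1}$ and of the $NCM$ line of (vi) with $NCM_E$ are exactly what the paper's proof computes. Your explicit second bijection in (vi) is cleaner than the paper's version of that step, which takes $g_3=g_2^{-1}g_1$ with $g_3g_2=g_1$ and ends with $D$ and $E$ interchanged.
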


\begin{proof}
i.
\begin{eqnarray*}
PCM_{(D^{-1})^{-1}}(g_1) & = & PCM_{(D^{-1})}(g^{-1})\\
& = & PCM_D(g^{-1})^{-1} \\
& = & PCM_D(g)~~ \forall~~ g \in C.~~ \text{Since}~~ C ~ \text{is~~ a~~ group}~~((g)^{-1})^{-1} = g
\end{eqnarray*}

\begin{eqnarray*}
NeCM_{(D^{-1})^{-1}}(g_1) & = & NeCM_{(D^{-1})}(g^{-1})\\
& = & NeCM_D(g^{-1})^{-1} \\
& = & NeCM_D(g)~~ \forall~~ g \in C.~~ \text{Since}~~ C ~ \text{is~~ a~~ group}~~((g)^{-1})^{-1} = g
\end{eqnarray*} and 

\begin{eqnarray*}
NCM_{(D^{-1})^{-1}}(g_1) & = & NCM_{(D^{-1})}(g^{-1})\\
& = & NCM_D(g^{-1})^{-1} \\
& = & NCM_D(g)~~ \forall~~ g \in C.~~ \text{Since}~~ C ~ \text{is~~ a~~ group}~~((g)^{-1})^{-1} = g
\end{eqnarray*}
Hence, $D = (D^{-1})^{-1}$\\

ii. Given that $D \subseteq E,$ this implies that, 
$$PCM_{D}(g^{-1}) \leq PCM_{E}(g^{-1})~~ \forall~~ g \in C$$
$$PCM_{D^{-1}}(g) \leq PCM_{E}(g),$$

$$NeCM_{D}(g^{-1}) \leq NeCM_{E}(g^{-1})~~ \forall~~ g \in C$$
$$NeCM_{D^{-1}}(g) \leq NeCM_{E}(g)$$ and 

$$NCM_{D}(g^{-1}) \geq NCM_{E}(g^{-1})~~ \forall~~ g \in C$$
$$NCM_{D^{-1}}(g) \geq NCM_{E}(g)$$ So, $D^{-1} \subseteq E^{-1}$\\

iii. \begin{eqnarray*}
PCM_{\dis \left(\bigcup^n_{i = 1} D_i\right)^{-1}}(g) & = & PCM_{\dis \left(\bigcup^n_{i = 1} D_i\right)}(g^{-1})\\
& = & \bigvee \lbrace  PCM_{D_i}(g^{-1});~~ i = 1, 2, \cdots, n \rbrace~~ \text{by~Definition ?} \\
& = & \bigvee \lbrace  PCM_{D_i}^{-1}(g);~~ i = 1, 2, \cdots, n \rbrace\\
& = & PCM_{\dis \left(\bigcup^n_{i = 1} {D_i}^{-1}\right)}(g)~~ \text{by~Definition ?}
\end{eqnarray*}

\begin{eqnarray*}
NeCM_{\dis \left(\bigcup^n_{i = 1} D_i\right)^{-1}}(g) & = & NeCM_{\dis \left(\bigcup^n_{i = 1} D_i\right)}(g^{-1})\\
& = & \bigvee \lbrace  NeCM_{D_i}(g^{-1});~~ i = 1, 2, \cdots, n \rbrace~~ \text{by~Definition ?} \\
& = & \bigvee \lbrace  NeCM_{D_i}^{-1}(g);~~ i = 1, 2, \cdots, n \rbrace\\
& = & NeCM_{\dis \left(\bigcup^n_{i = 1} {D_i}^{-1}\right)}(g)~~ \text{by~Definition ?}
\end{eqnarray*}
and

\begin{eqnarray*}
NCM_{\dis \left(\bigcap^n_{i = 1} D_i\right)^{-1}}(g) & = & NCM_{\dis \left(\bigcap^n_{i = 1} D_i\right)}(g^{-1})\\
& = & \bigwedge \lbrace  NCM_{D_i}(g^{-1});~~ i = 1, 2, \cdots, n \rbrace~~ \text{by~Definition ?} \\
& = & \bigwedge \lbrace  NCM_{D_i}^{-1}(g);~~ i = 1, 2, \cdots, n \rbrace\\
& = & NCM_{\dis \left(\bigcap^n_{i = 1} {D_i}^{-1}\right)}(g)~~ \text{by~Definition ?}
\end{eqnarray*}
Therefore, $ \dis \left(\bigcup^n_{i = 1} D_i\right)^{-1} = \bigcup^n_{i = 1} D_i^{-1}.$\\

iv. Property iv can be done in a similar manner.\\

v.
\begin{eqnarray*}
PCM_{ (D \circ E)^{-1}}(g_1) & = & PCM_{ (D \circ E)}(g^{-1}_1)\\
& = & \bigvee \lbrace  PCM_{D}(g_2) \wedge PCM_{E}(g_3);~~ g_2, g_3 \in C,~~ g_2g_3 = g_1^{-1} \rbrace \\
& = & \bigvee \lbrace  PCM_{E}(g_3) \wedge PCM_{D}(g_2);~~ g_2, g_3 \in C,~~ (g_2g_3)^{-1} = g_1 \rbrace\\
& = & \bigvee \lbrace  PCM_{E}(g^{-1}_3)^{-1} \wedge PCM_{D}(g^{-1}_2)^{-1};~~ g_2, g_3 \in C,~~ (g_2g_3)^{-1} = g_1 \rbrace\\
& = & \bigvee \lbrace  PCM_{E^{-1}}(g^{-1}_3) \wedge PCM_{D^{-1}}(g^{-1}_2);~~ g^{-1}_2, g^{-1}_3 \in C,~~ g^{-1}_2g^{-1}_3 = g_1 \rbrace\\
& = & PCM_{E^{-1} \circ D^{-1}}(g_1)~~ \forall~~ g_1 \in C.
\end{eqnarray*}

\begin{eqnarray*}
NeCM_{ (D \circ E)^{-1}}(g_1) & = & NeCM_{ (D \circ E)}(g^{-1}_1)\\
& = & \bigvee \lbrace  NeCM_{D}(g_2) \wedge NeCM_{E}(g_3);~~ g_2, g_3 \in C,~~ g_2g_3 = g_1^{-1} \rbrace \\
& = & \bigvee \lbrace  NeCM_{E}(g_3) \wedge NeCM_{D}(g_2);~~ g_2, g_3 \in C,~~ (g_2g_3)^{-1} = g_1 \rbrace\\
& = & \bigvee \lbrace  NeCM_{E}(g^{-1}_3)^{-1} \wedge NeCM_{D}(g^{-1}_2)^{-1};~~ g_2, g_3 \in C,~~ (g_2g_3)^{-1} = g_1 \rbrace\\
& = & \bigvee \lbrace  NeCM_{E^{-1}}(g^{-1}_3) \wedge NeCM_{D^{-1}}(g^{-1}_2);~~ g^{-1}_2, g^{-1}_3 \in C,~~ g^{-1}_2g^{-1}_3 = g_1 \rbrace\\
& = & NeCM_{E^{-1} \circ D^{-1}}(g_1)~~ \forall~~ g_1 \in C.
\end{eqnarray*}
and 

\begin{eqnarray*}
NCM_{ (D \circ E)^{-1}}(g_1) & = & NCM_{ (D \circ E)}(g^{-1}_1)\\
& = & \bigwedge \lbrace  NCM_{D}(g_2) \vee NCM_{E}(g_3);~~ g_2, g_3 \in C,~~ g_2g_3 = g_1^{-1} \rbrace \\
& = & \bigwedge \lbrace  NCM_{E}(g_3) \vee NCM_{D}(g_2);~~ g_2, g_3 \in C,~~ (g_2g_3)^{-1} = g_1 \rbrace\\
& = & \bigwedge \lbrace  PCM_{E}(g^{-1}_3)^{-1} \vee NCM_{D}(g^{-1}_2)^{-1};~~ g_2, g_3 \in C,~~ (g_2g_3)^{-1} = g_1 \rbrace\\
& = & \bigwedge \lbrace  NCM_{E^{-1}}(g^{-1}_3) \vee NCM_{D^{-1}}(g^{-1}_2);~~ g^{-1}_2, g^{-1}_3 \in C,~~ g^{-1}_2g^{-1}_3 = g_1 \rbrace\\
& = & NCM_{E^{-1} \circ D^{-1}}(g_1)~~ \forall~~ g_1 \in C.
\end{eqnarray*}
Hence, $(D \circ E) = E^{-1} \circ D^{-1}.$\\

vi. Since $C$ is a group, so for each $g_1, g_2 \in C,$ there exists a unique $g_3 (= g^{-1}_2 g_1) \in C$ such that $g_2 g_3 = g_1.$ Thus, 
\begin{eqnarray*}
PCM_{D \circ E}(g_1) & = & \dis \bigvee_{g_2 \in C} \lbrace  PCM_{D}(g_2) \wedge PCM_{E}(g^{-1}_2 g_1) \rbrace~~ \forall g_1 \in C, \\
NeCM_{D \circ E}(g_1) & = & \dis \bigvee_{g_2 \in C} \lbrace  NeCM_{D}(g_2) \wedge NeCM_{E}(g^{-1}_2 g_1) \rbrace~~ \forall g_1 \in C, \\ 
NCM_{D \circ E}(g_1) & = & \dis \bigwedge_{g_2 \in C} \lbrace  NCM_{D}(g_2) \vee NCM_{E}(g^{-1}_2 g_1) \rbrace~~ \forall g_1 \in C.
\end{eqnarray*}
Also,
\begin{eqnarray*}
PCM_{D \circ E}(g_1) & = & \dis \bigvee_{g_2 \in C} \lbrace  PCM_{E}(g_3) \wedge PCM_{D}(g_2);~~ g_2, g_3 \in C ~~\text{and}~~ g_2 g_3 = g_1 \rbrace, \\
NeCM_{D \circ E}(g_1) & = & \dis \bigvee_{g_2 \in C} \lbrace  NeCM_{E}(g_3) \wedge NeCM_{D}(g_2);~~ g_2, g_3 \in C ~~\text{and}~~ g_2 g_3 = g_1 \rbrace, \\ 
NCM_{D \circ E}(g_1) & = & \dis \bigwedge_{g_2 \in C} \lbrace  NCM_{E}(g_3) \vee NCM_{D}(g_2);~~ g_2, g_3 \in C ~~\text{and}~~ g_2 g_3 = g_1 \rbrace.
\end{eqnarray*}
Since $C$ is a group, it follows that for each $g_1, g_2 \in C$ there exists a unique $g_3 (= g^{-1}_2 g_1) \in C$ such that $g_3 g_2 = g_1.$ Then, 

\begin{eqnarray*}
PCM_{D \circ E}(g_1) & = & \dis \bigvee_{g_2 \in C} \lbrace  PCM_{E}(g_1g_2^{-1}) \wedge PCM_{D}(g_2) \rbrace \forall~~ g_1 \in C, \\
NeCM_{D \circ E}(g_1) & = & \dis \bigvee_{g_2 \in C} \lbrace  NeCM_{E}(g_1g_2^{-1}) \wedge NeCM_{D}(g_2) \rbrace \forall~~ g_1 \in C,\\ 
NCM_{D \circ E}(g_1) & = & \dis \bigwedge_{g_2 \in C} \lbrace  NCM_{E}(g_1g_2^{-1}) \vee NCM_{D}(g_2) \rbrace \forall~~ g_1 \in C.
\end{eqnarray*}
\end{proof}

\begin{definition}
Let $C$ be a group. Then, a picture fuzzy multiset $D$ over $C$ is called a picture fuzzy multigroup (PFMG) over $C$ if the positive, neutral and negative counts memberships of $D$ satisfy the following, for all $g_1, g_2 \in C$
\begin{itemize}
\item [i.] $PCM_{D}(g_1 g_2) \geq PCM_{D}(g_1) \wedge PCM_{D}(g_2)$
\item [ii.] $PCM_{D}(g^{-1}_1) \geq PCM_{D}(g_1)$ 
\item [iii.] $NeCM_{D}(g_1 g_2) \geq NeCM_{D}(g_1) \wedge NeCM_{D}(g_2)$
\item [iv.] $NeCM_{D}(g^{-1}_1) \geq NeCM_{D}(g_1)$
\item [v.] $NCM_{D}(g_1 g_2) \leq NCM_{D}(g_1) \vee NCM_{D}(g_2)$
\item [vi.] $NCM_{D}(g^{-1}_1) \leq NCM_{D}(g_1)$
\end{itemize}  
\end{definition}

\begin{theorem}
Let $D \in PFMS(C)$ and  $PCM_{D}(g^{-1}) \geq PCM_{D}(g),$ $NeCM_{D}(g^{-1}) \geq NeCM_{D}(g)$ and $NCM_{D}(g^{-1}) \leq NCM_{D}(g).$ Then, $L(g;~ D) = L(g^{-1};~ D).$
\end{theorem}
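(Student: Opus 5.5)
The notation $L(g;D)$ has not been defined in the text so far. Following the analogous results for fuzzy and intuitionistic fuzzy multigroups in \cite{s1, s2}, I read $L(g;D)$ as the \emph{length} of the count (the number of entries of the sequences $PCM_D(g)$, $NeCM_D(g)$, $NCM_D(g)$). In the picture fuzzy multiset setting of \cite{cfs}, this is the number of nonzero entries of the decreasingly ordered positive count, with the neutral and negative sequences padded to the same length. The plan is to show that the hypotheses force the three count sequences at $g$ and at $g^{-1}$ to coincide. Equal lengths then follow at once.

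\textbf{Step 1.} Apply the three hypotheses to $g^{-1}$ in place of $g$, using $(g^{-1})^{-1}=g$ in the group $C$. This gives
\[
PCM_D(g)\ge PCM_D(g^{-1}),\quad NeCM_D(g)\ge NeCM_D(g^{-1}),\quad NCM_D(g)\le NCM_D(g^{-1}).
\]
These are the same inversion trick used in part i of the preceding theorem on $(D^{-1})^{-1}=D$.

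\textbf{Step 2.} Combine Step 1 with the original hypotheses. Since the order on counts is componentwise after decreasing arrangement, it is antisymmetric. Hence $PCM_D(g^{-1})=PCM_D(g)$, $NeCM_D(g^{-1})=NeCM_D(g)$ and $NCM_D(g^{-1})=NCM_D(g)$.

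\textbf{Step 3.} Conclude that $L(g;D)=L(g^{-1};D)$. The length is computed from these count sequences alone, so equal sequences have equal length. Equivalently, Step 2 says $D=D^{-1}$ pointwise at $g$.

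The main obstacle is fixing the definition of $L$, since it was never introduced. I will adopt $L(g;D)=\max\{L(g;PCM_D),L(g;NeCM_D),L(g;NCM_D)\}$, as in \cite{s2}. One subtlety needs checking: the entrywise inequalities for sequences of possibly different lengths must be read with zero-padding, so that antisymmetry gives equality of entire padded sequences. With that convention, Step 2 is valid and the argument closes.
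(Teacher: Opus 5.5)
Your proposal is correct and follows the paper's argument. Apply the hypotheses at $g^{-1}$ and use $(g^{-1})^{-1}=g$ to get the reverse inequalities, hence $PCM_D(g)=PCM_D(g^{-1})$, $NeCM_D(g)=NeCM_D(g^{-1})$ and $NCM_D(g)=NCM_D(g^{-1})$, and equal lengths follow; the paper takes $L(g;D)=|PCM_D(g)|=|NeCM_D(g)|=|NCM_D(g)|$, which agrees with your $\max$ convention, and it does not address the zero-padding subtlety you raise.
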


\begin{proof}
From the theorem, $PCM_{D}(g^{-1}) \geq PCM_{D}(g),$ $NeCM_{D}(g^{-1}) \geq NeCM_{D}(g)$ and $NCM_{D}(g^{-1}) \leq NCM_{D}(g).$\\

So, $PCM_{D}(g) = PCM_{D}((g^{-1})^{-1}) \geq PCM_{D}(g^{-1}).$ \\ Then, $PCM_{D}(g) = PCM_{D}(g^{-1})$ \\

$NeCM_{D}(g) = NeCM_{D}((g^{-1})^{-1}) \geq NeCM_{D}(g^{-1}).$ \\ Then, $NeCM_{D}(g) = NeCM_{D}(g^{-1})$ and \\

$NCM_{D}(g) = NCM_{D}((g^{-1})^{-1}) \leq NCM_{D}(g^{-1}).$ \\ Then, $NCM_{D}(g) = NCM_{D}(g^{-1}).$\\ 

Thus, $L(g ; D) = |PCM_{D}(g)| = |NeCM_{D}(g)| = |NCM_{D}(g)|$ by Definition ?\\ Therefore, $$L(g;~ D) = |PCM_{D}(g^{-1})| = |NeCM_{D}(g^{-1})| = |NCM_{D}(g^{-1})| = L(g^{-1};~ D).$$ 
\end{proof}

\begin{theorem} \label{1}
Let $D \in PFMS(C).$ Then, for all $g \in C,$ \begin{itemize}
\item [i.] $PCM_{D}(e) \geq PCM_{D}(g)$ 
\item [ii.] $NeCM_{D}(e) \geq NeCM_{D}(g)$
\item [iii.] $NCM_{D}(e) \leq NCM_{D}(g)$
\item [iv.] $PCM_{D}(g^{n}) \geq PCM_{D}(g)$
\item [v.] $NeCM_{D}(g^n) \geq NeCM_{D}(g)$
\item [vi.] $NCM_{D}(g^n) \leq NCM_{D}(g)$
\item [vii.] $D^{-1} \supseteq D.$
\end{itemize}
\end{theorem}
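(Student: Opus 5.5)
The statement as written takes $D \in PFMS(C)$, but the inequalities cannot hold for an arbitrary picture fuzzy multiset. I will therefore read the hypothesis as $D \in PFMG(C)$, i.e.\ $D$ satisfies conditions i--vi of the definition of a picture fuzzy multigroup, with all count memberships compared componentwise in the multiset sense. The plan is to derive each item directly from those six axioms, treating the positive, neutral and negative counts in parallel. The positive and neutral counts are governed by $\wedge$ and $\geq$, and the negative count by $\vee$ and $\leq$ throughout.

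For items i--iii, first note that the inverse axioms ii, iv, vi combined with the preceding theorem give equality: $PCM_D(g^{-1})=PCM_D(g)$, and likewise for $NeCM_D$ and $NCM_D$. Then write $e = g g^{-1}$ and apply axiom i:
$$PCM_D(e)=PCM_D(gg^{-1})\geq PCM_D(g)\wedge PCM_D(g^{-1}) = PCM_D(g).$$
The same computation with axiom iii gives ii. For iii, axiom v gives $NCM_D(e)\leq NCM_D(g)\vee NCM_D(g^{-1})=NCM_D(g)$.

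For items iv--vi, I would induct on $n\geq 1$. The case $n=1$ is trivial. For the inductive step,
$$PCM_D(g^{n+1})=PCM_D(g^n g)\geq PCM_D(g^n)\wedge PCM_D(g)=PCM_D(g),$$
using the inductive hypothesis $PCM_D(g^n)\geq PCM_D(g)$. The neutral count is handled identically, and the negative count dually with $\vee$. If $n=0$ is intended, the claim is item i--iii. For negative $n$, write $g^n=(g^{-1})^{|n|}$ and combine the positive case with the equality $PCM_D(g^{-1})=PCM_D(g)$.

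For item vii, by the definition of $D^{-1}$ we have $PCM_{D^{-1}}(g)=PCM_D(g^{-1})\geq PCM_D(g)$, and similarly $NeCM_{D^{-1}}(g)\geq NeCM_D(g)$ and $NCM_{D^{-1}}(g)\leq NCM_D(g)$. By the inclusion definition (as in part ii of the earlier theorem) this is exactly $D\subseteq D^{-1}$; by the preceding theorem it is in fact equality.

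The main obstacle is interpretive rather than computational. Because the counts are multiset sequences, the meet of two count sequences must be taken componentwise after arranging each in decreasing order, and the lengths $L(g;D)$ must agree for the lattice inequalities to make sense. I would make this explicit once, invoking the preceding theorem's $L(g;D)=L(g^{-1};D)$ and padding shorter sequences with zeros (and with ones for the negative count). After that, all steps are the scalar arguments applied coordinatewise.
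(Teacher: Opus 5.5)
Your proof is correct and follows essentially the paper's argument: $e = gg^{-1}$ with the product and inverse axioms for i--iii, the recursion $g^n = g^{n-1}g$ for iv--vi, and direct unfolding of the definition of $D^{-1}$ for vii. You are also right that the hypothesis must really be $D \in PFMG(C)$, since the paper's proof silently uses the multigroup axioms, and your treatment of $n \leq 0$ and the observation that in fact $D^{-1} = D$ slightly extend what the paper writes.
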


\begin{proof}
Let $g, g^{-1} \in C.$\\
i. 
\begin{eqnarray*}
PCM_{D}(e) & = & PCM_{D}(g g^{-1})\\
& \geq & PCM_{D}(g) \wedge PCM_{D}(g^{-1}) \\
& \geq & PCM_{D}(g) \wedge PCM_{D}(g)\\
& = & PCM_{D}(g)
\end{eqnarray*}

ii \begin{eqnarray*}
NeCM_{D}(e) & = & NeCM_{D}(g g^{-1})\\
& \geq & NeCM_{D}(g) \wedge NeCM_{D}(g^{-1}) \\
& \geq & NeCM_{D}(g) \wedge NeCM_{D}(g)\\
& = & NeCM_{D}(g)
\end{eqnarray*}

iii. \begin{eqnarray*}
NCM_{D}(e) & = & NCM_{D}(g g^{-1})\\
& \leq & NCM_{D}(g) \vee NCM_{D}(g^{-1}) \\
& \leq & NCM_{D}(g) \vee NCM_{D}(g)\\
& = & NCM_{D}(g)
\end{eqnarray*}

iv. \begin{eqnarray*}
PCM_{D}(g^n) & \geq & PCM_{D}(g^{n-1}) \wedge PCM_{D}(g)\\
& \geq & PCM_{D}(g) \wedge PCM_{D}(g) \wedge \cdots \wedge PCM_{D}(g)~~ (\text{recursively}) \\
& = & PCM_{D}(g)
\end{eqnarray*}

v.  \begin{eqnarray*}
NeCM_{D}(g^n) & \geq & NeCM_{D}(g^{n-1}) \wedge NeCM_{D}(g)\\
& \geq & NeCM_{D}(g) \wedge NeCM_{D}(g) \wedge \cdots \wedge NeCM_{D}(g)~~ (\text{recursively}) \\
& = & NeCM_{D}(g)
\end{eqnarray*}

vi.  \begin{eqnarray*}
NCM_{D}(g^n) & \leq & NCM_{D}(g^{n-1}) \vee NCM_{D}(g)\\
& \leq & NCM_{D}(g) \vee NCM_{D}(g) \vee \cdots \vee NCM_{D}(g)~~ (\text{recursively}) \\
& = & NCM_{D}(g)
\end{eqnarray*}

vii. $$PCM_{D^{-1}}(g) = PCM_{D}(g^{-1}) \geq PCM_D(g)$$
$$NeCM_{D^{-1}}(g) = NeCM_{D}(g^{-1}) \geq NeCM_D(g)$$
$$NCM_{D^{-1}}(g) = NCM_{D}(g^{-1}) \leq NCM_D(g)$$ Thus, $D^{-1} \supseteq D.$
\end{proof}

\begin{theorem}
Let $D \in PFMS(C).$ Then, $D \in PFMG(C)$ if and only if $$(i)~PCM_{D}(g_1 g_2^{-1}) \geq PCM_{D}(g_1) \wedge PCM_{D}(g_2^{-1}),$$ $$(ii)~NeCM_{D}(g_1 g_2^{-1}) \geq NeCM_{D}(g_1) \wedge NeCM_{D}(g_2^{-1})$$ and $$(iii)~NCM_{D}(g_1 g_2^{-1}) \leq NCM_{D}(g_1) \vee NCM_{D}(g_2^{-1})$$ for all $g_1, g_2 \in C.$
\end{theorem}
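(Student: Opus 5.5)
The plan is to prove the two directions separately. Necessity comes straight from the definition of a PFMG. Sufficiency, as printed, only yields closure under products, so the real work is recovering conditions ii, iv and vi of the definition (the inverse conditions).

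\emph{Necessity.} Let $D \in PFMG(C)$ and $g_1, g_2 \in C$. Put $g_1$ and $g_2^{-1}$ into conditions i, iii and v of the definition of a PFMG. This gives $PCM_{D}(g_1 g_2^{-1}) \geq PCM_{D}(g_1) \wedge PCM_{D}(g_2^{-1})$, and the analogous inequalities for $NeCM_D$ (with $\wedge$) and $NCM_D$ (with $\vee$). These are exactly (i)--(iii).

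\emph{Sufficiency: product closure.} Assume (i)--(iii) hold for all $g_1, g_2 \in C$. Replace $g_2$ by $g_2^{-1}$ and use $(g_2^{-1})^{-1} = g_2$. Then (i) becomes $PCM_{D}(g_1 g_2) \geq PCM_{D}(g_1) \wedge PCM_{D}(g_2)$, which is condition i of the definition. In the same way (ii) gives condition iii and (iii) gives condition v.

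\emph{Sufficiency: inverse conditions (main obstacle).} The natural substitution $g_1 = e$, $g_2 = g$ in (i) only gives $PCM_D(g^{-1}) \geq PCM_D(e) \wedge PCM_D(g^{-1})$, which is vacuous. My approach is to use finiteness of orders. Suppose $g$ has finite order $m$, so $g^{-1} = g^{m-1}$. Iterating product closure, as in parts iv--vi of Theorem \ref{1}, gives
$$PCM_D(g^{-1}) = PCM_D(g^{m-1}) \geq PCM_D(g), \qquad NeCM_D(g^{-1}) \geq NeCM_D(g), \qquad NCM_D(g^{-1}) \leq NCM_D(g).$$
These are conditions ii, iv and vi. (For $m = 1$ we have $g = e$ and there is nothing to prove.) This is the step I expect to be the real obstruction. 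For an element of infinite order the printed conditions alone do not force the inverse inequality. For example, take $C = \mathbb{Z}$ with $PCM_D = 1$ on $n \geq 0$ and $PCM_D = 0$ otherwise, and $NeCM_D = NCM_D = 0$. Then (i)--(iii) hold, but condition ii of the definition fails. So I would carry out the argument under the standing assumption that every element of $C$ has finite order (in particular for finite $C$), and record that hypothesis explicitly in the proof. Combining the two parts of sufficiency with necessity then gives the stated equivalence.
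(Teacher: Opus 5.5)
Your diagnosis is right, and the problem lies in the statement, not in your argument. Replacing $g_2$ by $g_2^{-1}$ shows that (i)--(iii) are just product closure. Your $\mathbb{Z}$ example satisfies them while violating condition ii of the definition, so the equivalence as printed fails for general $C$.

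The paper's converse does not get around this. It writes
\[
PCM_D(g_1^{-1}) = PCM_D(eg_1^{-1}) \geq PCM_D(e)\wedge PCM_D(g_1^{-1}) \geq PCM_D(e)\wedge PCM_D(g_1) = PCM_D(g_1).
\]
The second inequality assumes the very inverse condition being proved; this is exactly the ``vacuous substitution'' you flagged. The last equality uses $PCM_D(e)\geq PCM_D(g_1)$, which the paper's earlier theorem also derives from the inverse condition. The $NeCM_D$ and $NCM_D$ chains have the same defect. Your torsion argument ($g^{-1}=g^{m-1}$ plus iterated product closure) is sound, and it rescues the printed statement for periodic, in particular finite, $C$.

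The two routes are really two ways of repairing the statement. The paper's necessity chain ends at $PCM_D(g_1)\wedge PCM_D(g_2)$, and its converse computation only works for the standard characterization. That characterization has right-hand sides $PCM_D(g_1)\wedge PCM_D(g_2)$, $NeCM_D(g_1)\wedge NeCM_D(g_2)$ and $NCM_D(g_1)\vee NCM_D(g_2)$. Under that reading the argument goes through in every group:
\begin{itemize}
\item $g_1=g_2=g$ gives $PCM_D(e)\geq PCM_D(g)$;
\item $g_1=e$, $g_2=g$ then gives $PCM_D(g^{-1})\geq PCM_D(e)\wedge PCM_D(g)=PCM_D(g)$;
\item product closure follows by substituting $g_2^{-1}$ for $g_2$ and using the inverse inequality.
\end{itemize}

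Your fix keeps the printed conditions but restricts $C$. The typo fix keeps $C$ arbitrary but strengthens the hypothesis. The latter is almost certainly what the author intended, so it is worth recording alongside your version.
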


\begin{proof}
Suppose that $D \in PFMG(C).$ This implies that, for all $g_1, g_2 \in C$
\begin{eqnarray*}
PCM_{D}(g_1 g_2^{-1}) & \geq & PCM_{D}(g_1) \wedge PCM_{D}(g_2^{-1}) \\
& \geq & PCM_{D}(g_1) \wedge PCM_{D}(g_2)
\end{eqnarray*}

\begin{eqnarray*}
NeCM_{D}(g_1 g_2^{-1}) & \geq & NeCM_{D}(g_1) \wedge NeCM_{D}(g_2^{-1}) \\
& \geq & NeCM_{D}(g_1) \wedge NeCM_{D}(g_2)
\end{eqnarray*}

\begin{eqnarray*}
NCM_{D}(g_1 g_2^{-1}) & \leq & NCM_{D}(g_1) \vee NCM_{D}(g_2^{-1}) \\
& \leq & NCM_{D}(g_1) \vee NCM_{D}(g_2)
\end{eqnarray*}
Conversely, suppose that (i), (ii) and (iii) hold. Also, 
\begin{eqnarray*}
PCM_{D}(g_1^{-1}) & = & PCM_{D}(e g_1^{-1})\\
& \geq & PCM_{D}(e) \wedge PCM_{D}(g_1^{-1}) \\
& \geq & PCM_{D}(e) \wedge PCM_{D}(g_1)\\
& = & PCM_{D}(g_1),
\end{eqnarray*}

\begin{eqnarray*}
NeCM_{D}(g_1^{-1}) & = & NeCM_{D}(e g_1^{-1})\\
& \geq & NeCM_{D}(e) \wedge NeCM_{D}(g_1^{-1}) \\
& \geq & NeCM_{D}(e) \wedge NeCM_{D}(g_1)\\
& = & NeCM_{D}(g_1)
\end{eqnarray*}
and 
\begin{eqnarray*}
NCM_{D}(g_1^{-1}) & = & NCM_{D}(e g_1^{-1})\\
& \leq & NCM_{D}(e) \vee NCM_{D}(g_1^{-1}) \\
& \leq & NCM_{D}(e) \vee NCM_{D}(g_1)\\
& = & NCM_{D}(g_1)
\end{eqnarray*}
Now, \begin{eqnarray*}
PCM_{D}(g_1 g_2) & \geq & PCM_{D}(g_1) \wedge PCM_{D}(g_2^{-1})\\
& = & PCM_{D}(g_1) \wedge PCM_{D}(g_2) 
\end{eqnarray*}
\begin{eqnarray*}
NeCM_{D}(g_1 g_2) & \geq & NeCM_{D}(g_1) \wedge NeCM_{D}(g_2^{-1})\\
& = & NeCM_{D}(g_1) \wedge NeCM_{D}(g_2) 
\end{eqnarray*} and
\begin{eqnarray*}
NCM_{D}(g_1 g_2) & \leq & NCM_{D}(g_1) \vee NCM_{D}(g_2^{-1})\\
& = & NCM_{D}(g_1) \vee NCM_{D}(g_2) 
\end{eqnarray*}
Therefore, $D \in PFMG(C).$
\end{proof}

\begin{definition}
Let $D \in PFMS(C).$ Then, the $(d, r, s, t)$-cut set of $D$ denoted by $D[r, s, t, d]$ is defined as $$D[r, s, t, d] = \lbrace c \in C~|~\sigma_D^k(c)\geq r,~\tau_D^k(c) \geq s,~\eta_D^k(c) \leq t;~ L(c)\geq k \geq d~\text{and}~k, d \in \mathbb{N}  \rbrace.$$
\end{definition}

\begin{theorem}
Let $D \in PFMS(C).$ Then, $D[r, s, t, d]$ is a subgroup of $C.$
\end{theorem}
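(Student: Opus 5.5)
The plan is to run the usual cut-set argument for fuzzy subgroups (as in the theorem of \cite{dp} on $C_{r,s,t}(Q)$), one coordinate $k$ at a time. The statement reads $D \in PFMS(C)$, but this can only hold when $D$ is a picture fuzzy multigroup: for an arbitrary multiset the cut need not be closed under products. So I will assume $D \in PFMG(C)$, and also that the cut is nonempty, e.g.\ $e \in D[r,s,t,d]$.

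I will read the conditions of the preceding definition in the standard multiset way, with $PCM_D = (\sigma_D^1,\dots,\sigma_D^{L})$, $NeCM_D = (\tau_D^1,\dots)$ and $NCM_D = (\eta_D^1,\dots)$ ordered componentwise. Under this reading, $PCM_D(g_1g_2) \geq PCM_D(g_1)\wedge PCM_D(g_2)$ means that for every $k$,
\[
\sigma_D^k(g_1g_2) \geq \sigma_D^k(g_1)\wedge \sigma_D^k(g_2),
\]
with the analogous inequalities for $\tau_D^k$ and, reversed, for $\eta_D^k$. The earlier characterisation theorem ($D\in PFMG(C)$ iff the three inequalities with $g_1g_2^{-1}$ hold) then lets me verify the one-step subgroup test: if $a,b \in D[r,s,t,d]$ then $ab^{-1} \in D[r,s,t,d]$.

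\textbf{Step 1: the identity lies in the cut.} For any $c$ in the cut, Theorem \ref{1}(i)--(iii) applied componentwise gives $\sigma_D^k(e)\ge\sigma_D^k(c)\ge r$, $\tau_D^k(e)\ge\tau_D^k(c)\ge s$ and $\eta_D^k(e)\le\eta_D^k(c)\le t$. So $e$ belongs to the cut whenever the cut is nonempty.

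\textbf{Step 2: the subgroup test.} Take $a,b$ in the cut. The inverse conditions together with the theorem showing $PCM_D(g)=PCM_D(g^{-1})$ (and likewise for $NeCM_D$ and $NCM_D$) give $\sigma_D^k(b^{-1})=\sigma_D^k(b)$, $\tau_D^k(b^{-1})=\tau_D^k(b)$ and $\eta_D^k(b^{-1})=\eta_D^k(b)$. The characterisation theorem then gives, for each relevant $k$,
\[
\sigma_D^k(ab^{-1})\ge \sigma_D^k(a)\wedge\sigma_D^k(b^{-1})\ge r\wedge r=r,
\]
and similarly $\tau_D^k(ab^{-1})\ge s$ and $\eta_D^k(ab^{-1})\le t\vee t=t$. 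Hence $ab^{-1}$ lies in the cut, which is therefore a subgroup of $C$.

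\textbf{Main obstacle: the index range $L(c)\ge k\ge d$.} Membership of $ab^{-1}$ depends on its own length $L(ab^{-1})$, which may differ from $L(a)$ and $L(b)$. I would handle this with the convention that count membership sequences are padded with zeros to a common length $L$, and interpret the cut condition as holding for all $k$ with $d\le k\le L$. Then every index $k$ used for $a$ and $b$ is also available for $ab^{-1}$, and the componentwise inequalities above apply directly. If the intended reading is instead ``for some $k$'', the argument would need the same $k$ to witness membership for both $a$ and $b$. In that case I would fall back on fixing a single $k=d$, where the componentwise argument goes through verbatim.
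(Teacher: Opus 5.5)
Your argument is correct under the conventions you fix, and it follows the same route as the paper: the one-step subgroup test, taking $g_1,g_2$ in the cut and deducing $\sigma_D^k(g_1g_2^{-1})\geq r$, $\tau_D^k(g_1g_2^{-1})\geq s$ and $\eta_D^k(g_1g_2^{-1})\leq t$ for the indices $k\geq d$. Your version makes explicit what the paper leaves out: the PFMG hypothesis that its unjustified step to $g_1g_2^{-1}$ needs (the statement is false for a general $D\in PFMS(C)$), nonemptiness of the cut, and a uniform reading of the index range matching the paper's ``for $k\geq d$''. The one weak point is your fallback for the ``for some $k$'' reading: fixing $k=d$ proves that a different set is a subgroup, not the set defined under that reading.
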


\begin{proof}
Let $g_1, g_2 \in D[r, s, t, d],$\\ $\Rightarrow$ for $k \geq d,$ we have $$\sigma_D^k(g_1)\geq r~~ \text{and}~~ \sigma_D^k(g_2)\geq r;$$ $$\tau_D^k(g_1)\geq s~~ \text{and}~~ \tau_D^k(g_2)\geq s$$ and $$\eta_D^k(g_1)\leq t~~ \text{and}~~ \eta_D^k(g_2)\leq t.$$ Then, $$\sigma_D^k(g_1 g_2^{-1})\geq r, ~~ \tau_D^k(g_1 g_2^{-1})\geq s~~\text{and}~~ \eta_D^k(g_1 g_2^{-1})\leq t$$ This implies that if $g_1, g_2 \in D[r, s, t, d]$ then, $g_1 g_2^{-1} \in D[r, s, t, d].$\\ Therefore, $D[r, s, t, d]$ is a subgroup of $C.$
\end{proof}

\begin{definition}
Let $D \in PFMS(C).$ Then, $D^{\ast}$ is defined as $$D^{\ast} = \lbrace g \in G~|~PCM_D(g) = PCM_D(e),~~ NeCM_D(g) = NeCM_D{e}~~ \text{and}~~ NCM_D(g) = NCM_D(e) \rbrace.$$
\end{definition}

\begin{theorem}
Let $D \in PFMS(C).$ Then, $D^{\ast}$ is a subgroup of $C.$
\end{theorem}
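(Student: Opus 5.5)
We use the one-step subgroup test, reading $D$ as a picture fuzzy multigroup over $C$, as in Theorem \ref{1}. Without the multigroup axioms $D^{\ast}$ need not be closed, so this hypothesis is necessary. First, $e \in D^{\ast}$ trivially, so $D^{\ast}\neq\emptyset$. It then suffices to show that $g_1, g_2 \in D^{\ast}$ implies $g_1 g_2^{-1} \in D^{\ast}$.

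Let $g_1, g_2 \in D^{\ast}$, so that $PCM_D(g_i)=PCM_D(e)$, $NeCM_D(g_i)=NeCM_D(e)$ and $NCM_D(g_i)=NCM_D(e)$ for $i=1,2$.

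By the characterization theorem above (the $g_1g_2^{-1}$ criterion), together with the inverse inequality $PCM_D(g_2^{-1})\geq PCM_D(g_2)$ from the definition of PFMG, we get
$$PCM_D(g_1g_2^{-1}) \geq PCM_D(g_1)\wedge PCM_D(g_2) = PCM_D(e).$$
Part (i) of Theorem \ref{1} gives the reverse inequality $PCM_D(e)\geq PCM_D(g_1g_2^{-1})$, so $PCM_D(g_1g_2^{-1})=PCM_D(e)$.

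The same argument with $\wedge$ and part (ii) gives $NeCM_D(g_1g_2^{-1})=NeCM_D(e)$. For the negative count, the inequalities are reversed. From (iii) of the characterization and (vi) of the definition,
$$NCM_D(g_1g_2^{-1})\leq NCM_D(g_1)\vee NCM_D(g_2)=NCM_D(e),$$
while part (iii) of Theorem \ref{1} gives $NCM_D(e)\leq NCM_D(g_1g_2^{-1})$. Hence equality holds here as well.

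All three count memberships of $g_1g_2^{-1}$ agree with those at $e$, so $g_1g_2^{-1}\in D^{\ast}$, and $D^{\ast}$ is a subgroup of $C$.

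The only delicate point is that the counts are multisets, that is, sequences, so $\wedge$, $\vee$ and the inequalities must be read componentwise on the decreasingly ordered sequences, as in the rest of the section. In particular, equality $PCM_D(g)=PCM_D(e)$ means equality entrywise. Sandwiching by $\geq$ and $\leq$ then yields equality entrywise, so this causes no real obstruction beyond bookkeeping.
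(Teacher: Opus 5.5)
Your proof is correct and is essentially the paper's argument: the one-step subgroup test applied to $g_1g_2^{-1}$, with the multigroup axioms giving $PCM_D(g_1g_2^{-1})\geq PCM_D(e)$, $NeCM_D(g_1g_2^{-1})\geq NeCM_D(e)$ and $NCM_D(g_1g_2^{-1})\leq NCM_D(e)$, and the extremal property of $e$ supplying the reverse inequalities. Your version is cleaner than the paper's in three places: it states the needed hypothesis $D\in PFMG(C)$; it uses the inverse axiom to pass from $g_2^{-1}$ to $g_2$, which the paper's ``by i'' silently skips; and it cites $PCM_D(e)\geq PCM_D(g)$ etc.\ for the reverse bounds, whereas the paper's ``But'' lines merely repeat the inequality just derived.
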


\begin{proof}
Let $g_1, g_2 \in D^{\ast}.$\\ Then, $$PCM_D(g_1) = PCM_D(g_2) = PCM_D(e)\quad \quad \quad \quad \quad \quad \quad \quad \quad(i)$$ 
$$NeCM_D(g_1) = NeCM_D(g_2) = NeCM_D(e) \quad \quad \quad \quad \quad \quad \quad \quad (ii)$$
$$NCM_D(g_1) = NCM_D(g_2) = NCM_D(e)\quad \quad \quad \quad \quad \quad \quad \quad \quad(iii)$$
Then, we have the following;

\begin{eqnarray*}
PCM_{D}(g_1 g_2^{-1}) & \geq & PCM_{D}(g_1) \wedge PCM_{D}(g_2^{-1}) \\
& \geq & PCM_{D}(e) \wedge PCM_{D}(e)~~\text{by~~ i}\\
& = & PCM_{D}(e)
\end{eqnarray*}
But, $PCM_{D}(g_1 g_2^{-1}) \geq PCM_{D}(e)$\\ i.e $PCM_{D}(g_1 g_2^{-1}) = PCM_{D}(e),$

\begin{eqnarray*}
NeCM_{D}(g_1 g_2^{-1}) & \geq & NeCM_{D}(g_1) \wedge NeCM_{D}(g_2^{-1}) \\
& \geq & NeCM_{D}(e) \wedge NeCM_{D}(e)\\
& = & NeCM_{D}(e)
\end{eqnarray*}
But, $NeCM_{D}(g_1 g_2^{-1}) \geq NeCM_{D}(e)$\\ i.e $NeCM_{D}(g_1 g_2^{-1}) = NeCM_{D}(e),$

\begin{eqnarray*}
NCM_{D}(g_1 g_2^{-1}) & \leq & NCM_{D}(g_1) \vee NCM_{D}(g_2^{-1}) \\
& \leq & NCM_{D}(e) \vee NCM_{D}(e)\\
& = & NCM_{D}(e)
\end{eqnarray*}
But, $NCM_{D}(g_1 g_2^{-1}) \leq NCM_{D}(e)$\\ i.e $NCM_{D}(g_1 g_2^{-1}) = NCM_{D}(e).$\\
This means that $g_1 g_2^{-1} \in D^{\ast}.$\\ Therefore, $D^{\ast}$ is a subgroup of $C.$
\end{proof}

\ \\ \ \\


\end{document}